\definecolor{webgreen}{rgb}{0,.5,0}
\numberwithin{equation}{section}
\def\C{{\mathds{C}}}
\def\R{{\mathbb{R}}}
\def\N{{\mathds{N}}}
\def\Z{{\mathds{Z}}}
\def\Res{\operatorname{Res}}
\def\1{{\bf 1}}
\newtheorem{theorem}{Theorem}[section]
\newtheorem{lemma}[theorem]{Lemma}
\newtheorem{cor}[theorem]{Corollary}
\newtheorem{definition}[theorem]{Definition}
\begin{document}

\title{{\bf Hyperbolic summation for functions of the GCD and LCM of several integers}}
\author{Randell Heyman \\
School of Mathematics and Statistics \\
University of New South Wales \\
Sydney, Australia \\
E-mail: {\tt randell@unsw.edu.au} \\ \ \\
L\'aszl\'o T\'oth \\ Department of Mathematics \\ University of P\'ecs \\
Ifj\'us\'ag \'utja 6, 7624 P\'ecs, Hungary \\ E-mail: {\tt ltoth@gamma.ttk.pte.hu}}
\date{}
\maketitle

\centerline{The Ramanujan Journal {\bf 62} (2023), 273--290}

\begin{abstract} Let $k\ge 2$ be a fixed integer. We consider sums of type $\sum_{n_1\cdots n_k\le x} F(n_1,\ldots,n_k)$,
taken over the hyperbolic region $\{(n_1,\ldots,n_k)\in \N^k: n_1\cdots n_k\le x\}$, where $F:\N^k\to \C$ is a 
given function. In particular, we deduce asymptotic formulas with remainder terms for the hyperbolic
summations $\sum_{n_1\cdots n_k\le x} f((n_1,\ldots,n_k))$ and $\sum_{n_1\cdots n_k\le x} f([n_1,\ldots,n_k])$,
involving the GCD and LCM of the integers $n_1,\ldots,n_k$, where $f:\N\to \C$ belongs to certain classes of functions.
Some of our results generalize those obtained by the authors \cite{HeyTot2021} for $k=2$.
\end{abstract}

{\sl 2010 Mathematics Subject Classification}: 11A05, 11A25, 11N37

{\sl Key Words and Phrases}: arithmetic function of several variables, convolute, greatest common divisor, least common multiple, 
hyperbolic summation, asymptotic formula, Piltz divisor problem

\section{Introduction} 

Let $\N:=\{1,2,\ldots\}$, let $F:\N^k\to \C$ be a function of $k$ ($k\ge 2$) variables, and 
consider the convolute of $F$, defined as the one variable function $\widetilde{F}: \N \to \C$ given by
\begin{equation*}
\widetilde{F}(n):= \sum_{n_1\cdots n_k=n} F(n_1,\ldots,n_k),
\end{equation*}
where the sum is over all $(n_1,\ldots,n_k)\in \N^k$ such that $n_1\cdots n_k=n$. Note that if $F$ is multiplicative, 
then $\widetilde{F}$ is also multiplicative. See 
Vai\-dy\-anathas\-wamy \cite{Vai1931}, T\'oth \cite[Sect.\ 6]{Tot2014}.

In this paper we look at sums of type  
\begin{equation*}
\sum_{n\le x} \widetilde{F}(n) = \sum_{n_1\cdots n_k\le x} F(n_1,\ldots,n_k),
\end{equation*}
taken over the hyperbolic region $\{(n_1,\ldots,n_k)\in \N^k: n_1\cdots n_k\le x\}$. In particular, 
given an arithmetic function $f:\N \to \C$, we  
are interested in the convolutes 
\begin{equation*} \label{G_f_k}
G_{f,k}(n):= \sum_{n_1\cdots n_k=n} f((n_1,\ldots,n_k)),
\end{equation*}
\begin{equation*} \label{L_f_k}
L_{f,k}(n):= \sum_{n_1\cdots n_k=n} f([n_1,\ldots,n_k]),
\end{equation*}
involving the GCD and LCM of integers. If $f$ is multiplicative, then the functions 
$G_{f,k}$ and $L_{f,k}$ are multiplicative as well.

Asymptotic formulas for sums 
\begin{equation*}
\sum_{n\le x} G_{f,k}(n) = \sum_{n_1\cdots n_k\le x} f((n_1,\ldots,n_k)),
\end{equation*}
in the case of certain special functions 
$f$ and for $k\ge 2$, in particular for $k=2$, were given by Heyman \cite{Hey2020}, Heyman and T\'oth 
\cite{HeyTot2021}, Kiuchi and Saad Eddin \cite{KS2021}, Kr\"atzel, Nowak and T\'oth \cite{KNT2012}. 
Some related probabilistic properties were studied by Iksanov, Marynych and Raschel \cite{IMR2022}. 

In fact, for every function $f$ one has 
\begin{equation} \label{G_form}
G_{f,k}(n)= \sum_{d^k\delta=n} (\mu*f)(d)\tau_k(\delta),
\end{equation}
where $\mu$ is the M\"obius function, $\tau_k$ is the $k$-factors Piltz divisor function
and $*$ denotes the convolution of arithmetic functions. See \cite[Prop.\ 5.1]{KNT2012}. 
Identity \eqref{G_form} shows that asymptotic formulas for the sums $\sum_{n\le x} G_{f,k}(n)$ are closely
related to asymptotics for the Piltz divisor function. 

Moreover, in the case of certain pairs $(f,k)$, asymptotic formulas for the sums $\sum_{n\le x} G_{f,k}(n)$ 
reduce to the Piltz divisor problem. For example, let $f(n)=n$ ($n\in \N$) and $k\ge 4$. Then, as mentioned in 
\cite[Sect.\ 4]{KNT2012}, it follows by elementary convolution arguments that if $\theta_k\ge 1/2$ is any real number such that
\begin{equation} \label{Piltz_form} 
\sum_{n\le x} \tau_k(n) = x\, P_{k-1}(\log x) + O(x^{\theta_k+\varepsilon})
\end{equation} 
holds for every $\varepsilon>0$, where $P_{k-1}(\log x)= \underset{s=1}\Res \left( \zeta^k(s)\frac{x^{s-1}}{s}\right)$ is a polynomial
in $\log x$ of degree $k-1$, with leading coefficient $1/(k-1)!$, then also 
\begin{equation}  \label{sum_gcd_k}
\sum_{n_1\cdots n_k\le x} (n_1,\ldots,n_k) = x\, Q_{k-1}(\log x) + O(x^{\theta_k+\varepsilon}),
\end{equation}
where $k\ge 4$, $Q_{k-1}(\log x) = \underset{s=1}\Res \left(\frac{\zeta^k(s)\zeta(ks-1)}{\zeta(ks)} \frac{x^{s-1}}{s}\right)$
is another polynomial in $\log x$ of degree $k-1$, with leading coefficient $\zeta(k-1)/((k-1)!\zeta(k))$. Note that here one 
can choose, e.g., $\theta_2=1/2$ and $\theta_k=\frac{k-1}{k+1}$ ($k\ge 3$), see Titchmarsh \cite[Th.\ 12.2]{Tit1986}. 
Also see Bordell\`es \cite[Sect.\ 4.7.6]{Bor2020}.

The error terms corresponding to $f(n)=n$ with $k=2$ and $k=3$ were investigated in \cite{KNT2012}, by analytic methods. For 
$f(n)=\tau_2(n)=:\tau(n)$ and $k=2$ see \cite{Hey2020}, \cite{HeyTot2021}. For $k\ge 3$ the cases of the divisor function $\tau(n)$ and the M\"obius function $\mu(n)$ were studied in \cite{KS2021}, giving explicit error terms, and computing the main terms for 
$k=3$ and $k=4$. However, note that for $f(n)=\tau(n)$, by \eqref{G_form},
\begin{equation*} 
\sum_{a_1\cdots a_k=n} \tau((a_1,\ldots,a_k)) = \sum_{d^k\delta = n} \tau_k(\delta)= 
\sum_{a_1\cdots a_kd^k=n} 1 =: \tau(\underbrace{1,\ldots,1}_{k},k)(n),
\end{equation*}
and the summation of this divisor function is known in the literature. See, e.g., Kr\"atzel \cite{Kra1988}.

In \cite{HeyTot2021} we established asymptotic formulas for $\sum_{n\le x} G_{f,2}(n)$ (in the case $k=2$) for various classes of functions $f$ by elementary arguments, in particular for the functions $f(n)= \log n, \omega(n), \Omega(n)$. In this paper we extend some of these results for any $k\ge 2$.

In the case of the LCM there is no known formula similar to \eqref{G_form} and to
give asymptotics for $\sum_{n\le x} L_{f,k}(n)$, with good error terms, is more difficult. In \cite{HeyTot2021}
we considered the case $k=2$ and the functions $f(n)=n, \log n, \omega(n), \Omega(n), \tau(n)$. For example, we proved (see
\cite[Th.\ 2.11]{HeyTot2021}) that
\begin{equation*}
\sum_{mn\le x} \tau([m,n])= x P_3(\log x) + O(x^{1/2+\varepsilon}),    
\end{equation*}
where $P_3(t)$ is a polynomial in $t$ of degree $3$ with leading coefficient
\begin{equation*}
\frac1{\pi^2} \prod_p \left(1-\frac1{(p+1)^2}\right).    
\end{equation*}

In this paper we give asymptotic formulas for $\sum_{n\le x} L_{f,k}(n)$ with any $k\ge 2$ in the case 
of some classes of functions $f$. Our main results are presented in Section \ref{Section_Main_results}, and
their proofs are included in Section \ref{Section_Proofs}.

For some different asymptotic results concerning functions of the GCD and LCM of several integers we refer to
Bordell\`es and T\'oth \cite{BorTot2022}, Hilberdink and T\'oth \cite{HilTot2016}, T\'oth and Zhai \cite{TotZha2018}, and their references.
For summations over $mn\le x$ of certain other two variables functions $F(m,n)$ see Kiuchi and Saad Eddin \cite{KS2020}, 
Sui and Liu \cite{SL2020}.

Throughout the paper we use the following notation: $\N =\{1,2,\ldots\}$; $(n_1,\ldots,n_k)$ and $[n_1,\ldots,n_k]$ denote the greatest common divisor (GCD) and least common multiple (LCM) of $n_1,\ldots,n_k\in \N$; $\varphi$ is Euler's totient function; 
$\tau(n)$ and $\sigma(n)$ are the number and sum of divisors of $n\in \N$; $\tau_k$ is the $k$-factors Piltz divisor function;  
$\omega(n)$ and $\Omega(n)$ stand for the number of prime divisors, respectively prime power divisors of $n\in \N$;
$*$ is the Dirichlet convolution of arithmetic functions of $k$ variables; 
$\mu$ denotes the M\"obius function of $k$ variables; the sums $\sum_p$ and products $\prod_p$ are taken over the primes $p$.

\section{Main results} \label{Section_Main_results}

For functions $F,G:\N^k\to \C$ ($k\ge 1$) consider their convolution $F*G$ defined by
\begin{equation} \label{convo_k}
(F*G)(n_1,\ldots,n_k)=\sum_{d_1\mid n_1, \ldots, d_k\mid n_k} F(d_1,\ldots,d_k) G(n_1/d_1,\ldots, n_k/d_k), 
\end{equation}
and the generalized M\"obius function $\mu(n_1,\ldots,n_k) = \mu(n_1)\cdots \mu(n_k)$, which is the inverse of the $k$-variable 
constant $1$ function under convolution \eqref{convo_k}. See the survey \cite{Tot2014} on properties of (multiplicative) arithmetic 
functions of several variables. 

Our first result is the following.
\begin{theorem} \label{Th_Wintner_F} Let $F:\N^k \to \C$ be an arbitrary arithmetic function of $k$ variables 
\textup{($k\ge 1$)}, and assume that the multiple series
\begin{equation} \label{series_F}
\sum_{n_1,\ldots,n_k=1}^{\infty} \frac{(\mu*F)(n_1,\ldots,n_k)}{n_1\cdots n_k}
\end{equation}
is absolutely convergent. Then 
\begin{equation*}
\lim_{x\to \infty} \frac1{x(\log x)^{k-1}}  \sum_{n_1\cdots n_k \le x} F(n_1,\ldots,n_k) = 
\frac1{(k-1)!} C_{F,k},
\end{equation*}
where $C_{F,k}$ is the sum of series \eqref{series_F}.
\end{theorem}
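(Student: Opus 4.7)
The plan is to use Möbius inversion in $k$ variables to reduce the hyperbolic sum of $F$ to a weighted hyperbolic sum of $1$'s, and then apply a dominated-convergence style argument against the known asymptotics for the Piltz divisor function.

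\medskip

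Set $G := \mu * F$, so that $F = \1 * G$ under the convolution \eqref{convo_k}, where $\1$ denotes the constant $1$ function of $k$ variables. Unfolding $F(n_1,\ldots,n_k)=\sum_{d_i\mid n_i}G(d_1,\ldots,d_k)$ and writing $n_i=d_i e_i$, a swap of summation order gives
\begin{equation*}
\sum_{n_1\cdots n_k\le x} F(n_1,\ldots,n_k)
=\sum_{d_1\cdots d_k\le x} G(d_1,\ldots,d_k)\, D_k\!\left(\frac{x}{d_1\cdots d_k}\right),
\end{equation*}
where $D_k(y):=\sum_{e_1\cdots e_k\le y}1=\sum_{n\le y}\tau_k(n)$. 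By \eqref{Piltz_form}, $D_k(y)=y\,P_{k-1}(\log y)+O(y^{\theta_k+\varepsilon})$; in particular $D_k(y)\sim y(\log y)^{k-1}/(k-1)!$ as $y\to\infty$, and there is a constant $A>0$ such that the elementary bound
\begin{equation*}
D_k(y)\le A\, y\,(\log(y+2))^{k-1} \qquad (y\ge 1)
\end{equation*}
holds (together with $D_k(y)=0$ for $y<1$).

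\medskip

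Next I would divide both sides by $x(\log x)^{k-1}$ and study the pointwise and dominated behaviour of each term as $x\to\infty$. For every fixed tuple $(d_1,\ldots,d_k)$ with $d_1\cdots d_k\ge 1$ one has
\begin{equation*}
\frac{D_k(x/(d_1\cdots d_k))}{x(\log x)^{k-1}}
\;\longrightarrow\;\frac{1}{(k-1)!\,d_1\cdots d_k}\qquad (x\to\infty),
\end{equation*}
since the $\log$ ratio tends to $1$. Meanwhile, for $d_1\cdots d_k\le x$ the above bound on $D_k$ yields
\begin{equation*}
\left|\frac{G(d_1,\ldots,d_k)\,D_k(x/(d_1\cdots d_k))}{x(\log x)^{k-1}}\right|
\;\le\; A'\,\frac{|G(d_1,\ldots,d_k)|}{d_1\cdots d_k},
\end{equation*}
uniformly in $x$, with $A'$ an absolute constant depending only on $k$ (obtained by bounding $(\log(x/(d_1\cdots d_k)+2))^{k-1}$ by $(\log(x+2))^{k-1}$, which is $\ll(\log x)^{k-1}$). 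For $d_1\cdots d_k>x$ the term is zero. Since the hypothesis asserts that $\sum|G(d_1,\ldots,d_k)|/(d_1\cdots d_k)$ converges, dominated convergence on the counting measure over $\N^k$ applies.

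\medskip

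Passing to the limit term by term then gives
\begin{equation*}
\frac{1}{x(\log x)^{k-1}}\sum_{n_1\cdots n_k\le x} F(n_1,\ldots,n_k)
\;\longrightarrow\;\frac{1}{(k-1)!}\sum_{d_1,\ldots,d_k=1}^{\infty}\frac{(\mu*F)(d_1,\ldots,d_k)}{d_1\cdots d_k}
=\frac{C_{F,k}}{(k-1)!},
\end{equation*}
which is the claim. The only delicate point is producing the uniform majorant $A'|G(d_1,\ldots,d_k)|/(d_1\cdots d_k)$ so that dominated convergence can be invoked; everything else is bookkeeping. An alternative, slightly more quantitative version would split the range $d_1\cdots d_k\le x$ at $d_1\cdots d_k\le y$ versus $y<d_1\cdots d_k\le x$ with $y=y(x)\to\infty$ growing more slowly than any power of $x$, using \eqref{Piltz_form} on the first range and the crude bound on the tail, but the dominated convergence route above is the cleanest way to reach the stated limit.
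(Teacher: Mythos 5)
Your argument is correct. The underlying decomposition is the same one the paper uses: writing $F=\1*(\mu*F)$ and collapsing the hyperbolic sum to $\sum_{d_1\cdots d_k\le x}(\mu*F)(d_1,\ldots,d_k)\,D_k(x/(d_1\cdots d_k))$ with $D_k(y)=\sum_{n\le y}\tau_k(n)$. The difference is in how the limit is then extracted. The paper first passes to one variable via the convolute, rewriting $\widetilde F(n)=\sum_{d\delta=n}g(d)\tau_k(\delta)$ with $g(d)=\sum_{d_1\cdots d_k=d}(\mu*F)(d_1,\ldots,d_k)$, and then invokes Cohen's mean-value theorem (Lemma~\ref{Th_Cohen}), which it reproves by expanding $(\log(x/d))^{k-1}$ binomially and bounding each $\sum_{d\le x}|g(d)|(\log d)^j/d$ by splitting the range at $d=x^{\varepsilon}$. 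You instead stay in $k$ variables and justify the interchange of limit and sum by dominated convergence against the majorant $A'|G(d_1,\ldots,d_k)|/(d_1\cdots d_k)$, needing only the crude uniform bound $D_k(y)\ll y(\log(y+2))^{k-1}$ together with the pointwise asymptotic $D_k(y)\sim y(\log y)^{k-1}/(k-1)!$, both of which follow from the elementary estimate the paper also uses (and neither requires the full strength of \eqref{Piltz_form}). The two justifications are equivalent in substance --- the paper's $\varepsilon$-splitting is a hands-on instance of dominated convergence --- but yours is cleaner and avoids the binomial bookkeeping, at the cost of not isolating the reusable one-variable statement (Cohen's lemma) that the paper records for its own sake.
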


For $k=1$ this is Wintner's mean value theorem, going back to the work of van der Corput.  See, e.g., \cite{Coh1961}, 
\cite[Th.\ 2.19]{Hil}, \cite[p.\ 138]{Pos1988}.
Also, Theorem \ref{Th_Wintner_F} is the analog of the corresponding result for summation of functions $F(n_1,\ldots,n_k)$ with $n_1,\ldots,n_k\le x$, 
obtained by Ushiroya \cite{Ush2012}. Note that if $F$ is multiplicative, then
\begin{equation*}
C_{F,k} = \prod_p \left(1-\frac1{p}\right)^k \sum_{\nu_1,\ldots,\nu_k=0}^{\infty} \frac{F(p^{\nu_1},\ldots,p^{\nu_k})}{p^{\nu_1+\cdots +\nu_k}}.    
\end{equation*}

If $F(n_1,\ldots,n_k)=f((n_1,\ldots,n_k))$, then we deduce the next result.

\begin{theorem} \label{Cor_Wintner_f} Let $f:\N \to \C$ be an arbitrary arithmetic function of one variable, let $k\ge 2$ and assume that the 
series
\begin{equation*} 
\sum_{n=1}^{\infty} \frac{f(n)}{n^k}
\end{equation*}
is absolutely convergent. Then 
\begin{equation*}
\lim_{x\to \infty} \frac1{x(\log x)^{k-1}}  \sum_{n_1\cdots n_k \le x} f((n_1,\ldots,n_k)) = 
\frac1{(k-1)!\zeta(k)} \sum_{n=1}^{\infty} \frac{f(n)}{n^k}. 
\end{equation*}
\end{theorem}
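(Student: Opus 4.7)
The plan is to derive this as a direct consequence of Theorem \ref{Th_Wintner_F} applied to the $k$-variable function $F(n_1,\ldots,n_k) := f((n_1,\ldots,n_k))$. The whole argument reduces to three sub-tasks: compute the $k$-variable Möbius transform $\mu*F$ in closed form, verify absolute convergence of the series \eqref{series_F}, and identify the constant $C_{F,k}$.

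For the first sub-task, I would set $g := \mu*f$ for the ordinary (one-variable) Möbius inverse, so that $f(n) = \sum_{d\mid n} g(d)$. Substituting into the definition of $F$ gives
$$F(n_1,\ldots,n_k) = \sum_{d \mid (n_1,\ldots,n_k)} g(d) = \sum_{d\mid n_1,\,\ldots,\,d\mid n_k} g(d),$$
which exhibits $F$ as the $k$-variable convolution $F = G * \mathds{1}$ in the sense of \eqref{convo_k}, where $\mathds{1}$ denotes the constant $1$ function in $k$ variables and $G$ is the diagonally supported function with $G(d,\ldots,d)=g(d)$ and $G(d_1,\ldots,d_k)=0$ otherwise. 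Since $\mu$ is the convolution inverse of $\mathds{1}$, convolving both sides with $\mu$ yields $\mu * F = G$. Consequently, the multiple series in \eqref{series_F} collapses to a single sum,
$$\sum_{n_1,\ldots,n_k=1}^{\infty} \frac{(\mu*F)(n_1,\ldots,n_k)}{n_1\cdots n_k} = \sum_{d=1}^{\infty} \frac{g(d)}{d^k}.$$

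The second sub-task is a routine bound: $|g(d)| \le \sum_{e\mid d} |f(e)|$, and swapping summations gives $\sum_d |g(d)|/d^k \le \zeta(k)\sum_e |f(e)|/e^k$, which is finite by hypothesis. For the third, $\sum_d g(d)/d^k$ is the value at $s=k$ of the Dirichlet series of $\mu*f$, equal to $\zeta(k)^{-1}\sum_n f(n)/n^k$. Feeding $C_{F,k} = \zeta(k)^{-1}\sum_n f(n)/n^k$ into Theorem \ref{Th_Wintner_F} yields the asserted asymptotic. The only content-bearing observation in this proof is that the multi-variable Möbius transform of a function of the GCD concentrates on the diagonal $n_1=\cdots=n_k$; once that is established the rest is elementary Dirichlet-series bookkeeping, so no serious obstacle is expected.
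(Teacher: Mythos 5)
Your proposal is correct and follows essentially the same route as the paper: both reduce to Theorem \ref{Th_Wintner_F} by observing that $\mu*F$ is supported on the diagonal, which is exactly the statement that the associated one-variable function $g$ in \eqref{F} is supported on $k$-th powers with $g(m^k)=(\mu*f)(m)$, after which the series collapses to $\zeta(k)^{-1}\sum_m f(m)/m^k$. The only difference is cosmetic: you derive the diagonal-support fact directly from $F=G*\1$, while the paper cites the known identity \eqref{G_form}.
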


For example, taking the function $f(n)=\log n$ we deduce that for every $k\ge 2$ one has 
\begin{equation*}
\prod_{n_1\cdots n_k\le x} (n_1,\ldots,n_k) = x^{(1+o(1))\frac{K}{(k-1)!} x (\log  x)^{k-2}}, \quad \text{ as $x\to \infty$}, 
\end{equation*}
where the constant $K:=K_{\log,k}$ is given by \eqref{K}. For $k\ge 3$ we obtain more precise formulas for the functions $\log n, 
\omega(n), \Omega(n)$, included in the next theorem. The case $k=2$ has been discussed by the authors \cite[Cor.\ 2.6]{HeyTot2021}.

\begin{theorem} \label{Th_gcd} Let $k\ge 3$ and let $f$ be one of the functions $\log n, \omega(n), \Omega(n)$. Let $\theta_k\ge 1/k$ 
denote any real number satisfying \eqref{Piltz_form}. Then
\begin{equation*}
\sum_{n_1\cdots n_k\le x} f((n_1,\ldots,n_k)) = x\, P_{f,k-1}(\log x) +O(x^{\theta_k+\varepsilon})    
\end{equation*}
where $P_{f,k-1}(t)$ are polynomials in $t$ of degree $k-1$ with leading coefficient $\frac1{(k-1)!} K_{f,k}$, and where 
\begin{equation} \label{K}
K_{\log,k}= \sum_p \frac{\log p}{p^k-1}, \quad
K_{\omega,k}= \sum_p \frac1{p^k}, \quad 
K_{\Omega,k}= \sum_p \frac1{p^k-1}.
\end{equation}
\end{theorem}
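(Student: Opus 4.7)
My plan is to combine the convolution identity \eqref{G_form} with the Piltz asymptotic \eqref{Piltz_form}. The first step is to identify $\mu*f$ for each of the three functions. From the elementary Dirichlet convolution identities $\log = \Lambda * 1$, $\omega = \1_{\{p\}} * 1$, and $\Omega = \1_{\{p^a:\, a\ge 1\}} * 1$, M\"obius inversion yields $\mu*\log = \Lambda$, $\mu*\omega = \1_{\{p\}}$, and $\mu*\Omega = \1_{\{p^a\}}$. In all three cases $(\mu*f)(d)$ is nonnegative, bounded by $\log d$, and supported on the sparse set of prime powers, which is the decisive feature for the error analysis below.

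Using \eqref{G_form} and exchanging the order of summation, I would write
\begin{equation*}
\sum_{n\le x} G_{f,k}(n) = \sum_{d\le x^{1/k}} (\mu*f)(d) \sum_{\delta\le x/d^k}\tau_k(\delta).
\end{equation*}
Substituting \eqref{Piltz_form} into the inner sum produces a main term
\begin{equation*}
M(x) := x \sum_{d\le x^{1/k}} \frac{(\mu*f)(d)}{d^k}\, P_{k-1}\bigl(\log x - k\log d\bigr),
\end{equation*}
together with an error
\begin{equation*}
\ll x^{\theta_k + \varepsilon} \sum_{d\le x^{1/k}} \frac{|(\mu*f)(d)|}{d^{k(\theta_k+\varepsilon)}} \ll x^{\theta_k+\varepsilon},
\end{equation*}
the last bound following because $k\theta_k \ge 1$ by hypothesis and the extra $k\varepsilon$ gives absolute convergence of the $d$-series over prime powers.

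To process $M(x)$ I would expand $P_{k-1}(\log x - k\log d) = \sum_{j=0}^{k-1} c_j(\log d)\,(\log x)^j$, where each $c_j$ is a polynomial in $\log d$ of degree $k-1-j$. Extending each inner sum $\sum_{d\le x^{1/k}} (\mu*f)(d)\,c_j(\log d)/d^k$ to infinity introduces a tail majorised, via the sparse support of $\mu*f$, by $x^{(1-k)/k}(\log x)^{k-1-j}$; after the outer factor $x$ this contributes $O(x^{1/k}(\log x)^{k-1}) \subseteq O(x^{\theta_k+\varepsilon})$, thanks to $\theta_k \ge 1/k$. This turns $M(x)$ into $x\,P_{f,k-1}(\log x)$, a polynomial of degree $k-1$ in $\log x$. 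Its leading coefficient comes from $j=k-1$, where $c_{k-1}$ equals $1/(k-1)!$ (the leading coefficient of $P_{k-1}$); hence the leading coefficient is
\begin{equation*}
\frac{1}{(k-1)!} \sum_{d=1}^{\infty} \frac{(\mu*f)(d)}{d^k},
\end{equation*}
which, via the Euler product and geometric series in $1/p^k$, evaluates for the three choices of $f$ to the constants $K_{\log,k}$, $K_{\omega,k}$, $K_{\Omega,k}$ displayed in \eqref{K}.

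The main obstacle is essentially bookkeeping rather than conceptual depth: one must verify that the various $\log d$ factors produced by expanding $P_{k-1}(\log x - k\log d)$ and by the tail estimates are uniformly absorbed by $x^{\varepsilon}$, and that the borderline case $k\theta_k = 1$ (i.e.\ $\theta_k = 1/k$) is rescued by the arbitrary $\varepsilon>0$. Once \eqref{G_form} and \eqref{Piltz_form} are in hand, everything else reduces to direct convolution manipulation.
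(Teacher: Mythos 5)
Your proposal is correct and follows essentially the same route as the paper's proof: combine \eqref{G_form} with \eqref{Piltz_form}, bound the error sum over prime powers using $k\theta_k\ge 1$ plus the $\varepsilon$, and extend the main-term sums over $d=p^\nu\le x^{1/k}$ to infinity with a tail of size $O(x^{1/k-1}(\log x)^{k-1})$, which after multiplication by $x$ is absorbed into $O(x^{\theta_k+\varepsilon})$. The only difference is that the paper carries out the ``bookkeeping'' you defer explicitly (via the Eulerian-number identity \eqref{series_id} and the estimates \eqref{est_gen} and \eqref{estimate_prime}), and your evaluation of the leading constants agrees with \eqref{K}.
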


The following class of functions was defined by Hilberdink and T\'oth \cite{HilTot2016}. 

\begin{definition} \label{Def_A_r}
Given  a fixed real number $r$ let ${\cal A}_r$ denote the
class of multiplicative arithmetic functions $f:\N \to \C$
satisfying the following properties: there exist real constants
$C_1,C_2$ such that $|f(p)-p^r|\le C_1\, p^{r-1/2}$ for every prime $p$, and 
$|f(p^{\nu})|\le C_2\, p^{\nu r}$ for every prime power $p^\nu$ with $\nu \ge 2$.
\end{definition}

Observe that the functions $f(n)=n^r$, $\sigma(n)^r$, $\varphi(n)^r$
belong to the class ${\cal A}_r$ for every $r\in \R$.
See \cite{HilTot2016} for some other examples of functions in class ${\cal A}_r$, including sums of 
divisor functions (both standard and alternating) and generalisations of both Euler and Dedekind functions.

The following result was proved in \cite[Th.\ 2.1]{HilTot2016}. Let $k\ge 2$ be a fixed integer and 
let $f\in {\cal A}_r$ be a function, where $r\ge 0$ is
real. Then for every $\varepsilon >0$,
\begin{equation} \label{form}
\sum_{n_1,\ldots,n_k\le x} f([n_1,\ldots,n_k]) = C_{f,k}
\frac{x^{k(r+1)}}{(r+1)^k}
+ O\big(x^{k(r+1)-\frac1{2} +\varepsilon}\big),
\end{equation}
where
\begin{equation} \label{C_f_k} 
C_{f,k}= \prod_p \left(1-\frac1{p}\right)^k \sum_{\nu_1,\ldots,
\nu_k=0}^{\infty}
\frac{f(p^{\max(\nu_1,\ldots,\nu_k)})}{p^{(r+1)(\nu_1 +\cdots
+\nu_k)}}.
\end{equation}

In this paper we prove the following related result.

\begin{theorem} \label{Th_class_r} Let $k\ge 2$ be a fixed integer and let $f$ be a function in the class ${\cal A}_r$, 
given by Definition \ref{Def_A_r}, where $r\ge 0$ is real. Let $\theta_k\ge 1/2$ be any real number satisfying 
\eqref{Piltz_form}.  Then for every $\varepsilon >0$,
\begin{equation} \label{f_form}
\sum_{n_1\cdots n_k\le x} f([n_1,\ldots,n_k]) = x^{r+1} Q_{f,k-1} (\log x)
+ O\big(x^{r+\theta_k+\varepsilon}\big),
\end{equation}
where $Q_{f,k-1}(t)$ is a polynomial in $t$ of degree $k-1$ with leading coefficient $\frac1{(r+1)(k-1)!}C_{f,k}$, the
constant $C_{f,k}$ being given by \eqref{C_f_k}.
\end{theorem}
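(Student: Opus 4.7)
The plan is to analyze the Dirichlet series of the multiplicative convolute $L_{f,k}(n) := \sum_{n_1\cdots n_k=n} f([n_1,\ldots,n_k])$, extract its principal singularity at $s = r+1$, and reduce to the Piltz-type estimate \eqref{Piltz_form} via hyperbolic summation. Set $D(s) := \sum_{n \ge 1} L_{f,k}(n)/n^s$, with Euler factor
$$A_p(u) := \sum_{\nu_1,\ldots,\nu_k \ge 0} f(p^{\max(\nu_1,\ldots,\nu_k)})\, u^{\nu_1+\cdots+\nu_k} = 1 + k f(p)\, u + \left[\binom{k}{2} f(p) + k f(p^2)\right] u^2 + \cdots,$$
whose $u^m$-coefficient is $O_k(p^{rm})$ via $|f(p^\nu)| \le C_2 p^{\nu r}$. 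Invoking the first-order cancellation $|f(p)-p^r| \le C_1 p^{r-1/2}$ against $(1-p^r u)^{-k}$ yields
$$H_p(s) := A_p(p^{-s})(1-p^{r-s})^k = 1 + O\bigl(p^{r-s-1/2}\bigr) + O\bigl(p^{2(r-s)}\bigr),$$
so that $H(s) := \prod_p H_p(s) = D(s)/\zeta(s-r)^k$ is given by an absolutely convergent Euler product on $\operatorname{Re}(s) > r+1/2$, with $H(r+1) = C_{f,k}$ by direct comparison with \eqref{C_f_k}.

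Writing $H(s) = \sum_d h(d)/d^s$ (with $h$ multiplicative) and using $\zeta(s-r)^k = \sum_\delta \tau_k(\delta)\delta^r/\delta^s$, this yields the Dirichlet convolution identity $L_{f,k}(n) = \sum_{d\delta=n} h(d)\,\tau_k(\delta)\,\delta^r$, whence
$$\sum_{n \le x} L_{f,k}(n) = \sum_{d \le x} h(d)\, T(x/d), \qquad T(y) := \sum_{\delta \le y} \tau_k(\delta)\,\delta^r.$$
Partial summation of \eqref{Piltz_form} delivers $T(y) = y^{r+1}\widetilde P_{k-1}(\log y) + O(y^{r+\theta_k+\varepsilon})$, with $\widetilde P_{k-1}$ of degree $k-1$ and leading coefficient $1/((r+1)(k-1)!)$. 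Substituting into the hyperbolic sum and extending the absolutely convergent series $\sum_d h(d)(\log d)^j/d^{r+1}$ ($0\le j\le k-1$) to infinity produces the main term $x^{r+1} Q_{f,k-1}(\log x)$, with leading coefficient $H(r+1)/((r+1)(k-1)!) = C_{f,k}/((r+1)(k-1)!)$, as required.

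Both error contributions are bounded using the convergence of $H(s)$ on $\operatorname{Re}(s) > r+1/2$: the tail from extending the $d$-sums is $O(x^{r+1/2+\varepsilon})$, and the partial-summation error yields $\sum_{d\le x}|h(d)|(x/d)^{r+\theta_k+\varepsilon}\ll x^{r+\theta_k+\varepsilon}$; both are absorbed in $O(x^{r+\theta_k+\varepsilon})$ since $\theta_k \ge 1/2$. \textbf{The main obstacle} is the factorization step: rigorously establishing absolute convergence of the Euler product for $H(s)$ on $\operatorname{Re}(s) > r+1/2$. The first-order cancellation is immediate, but the $u^2$-coefficient of $A_p(u)(1-p^r u)^k$ must be shown to be $O(p^{2r})$, which requires combining both defining inequalities of $\mathcal{A}_r$; higher-order terms ($m \ge 3$) are then controlled uniformly via $|f(p^\nu)| \le C_2 p^{\nu r}$ and the polynomial bound $\binom{m+k-1}{k-1}$ on the number of compositions $\nu_1+\cdots+\nu_k=m$.
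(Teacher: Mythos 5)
Your proposal is correct and follows essentially the same strategy as the paper: factor the Dirichlet series as $\zeta(s-r)^k$ times a factor convergent for $\Re s>r+1/2$, turn this into a convolution identity, apply partial summation to \eqref{Piltz_form} to handle $\sum_{j\le y}j^r\tau_k(j)$, and then extend the outer sums to infinity with tails controlled by the half-plane of convergence. The one genuine difference is in how the factorization is obtained and carried. The paper quotes the multivariable Dirichlet-series identity of Hilberdink and T\'oth (Lemma \ref{Lemma_class_f}), which yields the $k$-variable convolution $f([n_1,\ldots,n_k])=\sum(j_1\cdots j_k)^r h_{f,k}(d_1,\ldots,d_k)$ and forces the subsequent summation to run over $k$ separate variables $d_1,\ldots,d_k\le x$. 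You instead work with the one-variable Dirichlet series of the convolute $L_{f,k}$ and prove the factorization $D(s)=\zeta(s-r)^k H(s)$ directly from the Euler product, using both defining inequalities of ${\cal A}_r$ (the $\nu=1$ cancellation for the $u$-coefficient, both bounds for the $u^2$-coefficient, and the crude bound with the composition count $\binom{m+k-1}{k-1}$ for $m\ge3$); this is exactly the diagonal specialization $s_1=\cdots=s_k=s$ of the quoted lemma. What your route buys is a self-contained argument and a cleaner single-variable convolution $L_{f,k}=h*(\tau_k\cdot\operatorname{id}^r)$, so the tail estimates involve only one summation variable; what it costs is having to verify the Euler-product convergence yourself, including the small point (which your sketch does implicitly cover) that the coefficientwise bounds give absolute convergence of the Dirichlet series $\sum_d|h(d)|d^{-\sigma}$ for $\sigma>r+1/2$, not merely of the Euler product. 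All error terms match the paper's: tails of order $x^{r+1/2+\varepsilon}$ absorbed since $\theta_k\ge1/2$, and the leading coefficient $H(r+1)=C_{f,k}$ agrees with \eqref{C_f_k}.
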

 
We point out the next formula, which is the counterpart of \eqref{sum_gcd_k}.

\begin{cor} {\rm ($f(n)=n$, $r=1$)} Let $k\ge 2$. Then for every $\varepsilon >0$,
\begin{equation*} 
\sum_{n_1\cdots n_k\le x} [n_1,\ldots,n_k] = x^2 \overline{Q}_{k-1} (\log x)
+ O\big(x^{3/2+\varepsilon}\big),
\end{equation*}
where $\overline{Q}_{k-1}(t)$ is a polynomial in $t$ of degree $k-1$ with leading coefficient $\frac1{2(k-1)!}C_k$, and
\begin{equation*}  
C_k= \prod_p \left(1-\frac1{p}\right)^k \sum_{\nu_1,\ldots,
\nu_k=0}^{\infty} \frac1{p^{2(\nu_1 +\cdots
+\nu_k)-\max(\nu_1,\ldots,\nu_k)}}.
\end{equation*}
\end{cor}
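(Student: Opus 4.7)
The plan is to derive this corollary as a direct specialization of Theorem \ref{Th_class_r} to the function $f(n) = n$ with parameter $r = 1$.

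First, I would verify that $f(n) = n$ lies in the class ${\cal A}_1$. With $f(p) = p = p^1$ we have $|f(p)-p^1|=0\le C_1 p^{1/2}$, which holds trivially with, say, $C_1 = 0$, and $|f(p^\nu)| = p^\nu \le 1\cdot p^{\nu}$ for every prime power with $\nu \ge 2$, so the second defining inequality of ${\cal A}_1$ holds with $C_2 = 1$. Hence the hypotheses of Theorem \ref{Th_class_r} are met.

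Next, I would apply Theorem \ref{Th_class_r}: the main term is immediately $x^{r+1} Q_{f,k-1}(\log x) = x^2 \overline{Q}_{k-1}(\log x)$, a polynomial in $\log x$ of degree $k-1$. To match the shape of the leading coefficient claimed in the statement, I would substitute $r=1$ and $f(p^{\max(\nu_1,\ldots,\nu_k)})=p^{\max(\nu_1,\ldots,\nu_k)}$ into \eqref{C_f_k}; the summand then becomes
$$\frac{p^{\max(\nu_1,\ldots,\nu_k)}}{p^{2(\nu_1+\cdots+\nu_k)}} = \frac{1}{p^{2(\nu_1+\cdots+\nu_k)-\max(\nu_1,\ldots,\nu_k)}},$$
so $C_{f,k}$ reduces exactly to the constant $C_k$ appearing in the corollary, and the leading coefficient $\frac{1}{(r+1)(k-1)!}C_{f,k}$ becomes $\frac{1}{2(k-1)!}C_k$.

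Finally, the error term furnished by Theorem \ref{Th_class_r} is $O(x^{r+\theta_k+\varepsilon}) = O(x^{1+\theta_k+\varepsilon})$; taking $\theta_k = 1/2$, which satisfies the Piltz asymptotic \eqref{Piltz_form} for $k=2,3$, yields the stated $O(x^{3/2+\varepsilon})$ bound. There is no substantive obstacle here: once Theorem \ref{Th_class_r} is in hand the argument is essentially a bookkeeping exercise. The only point requiring a moment's care is tracking how the $f$-factor $p^{\max(\nu_1,\ldots,\nu_k)}$ combines with the denominator $p^{(r+1)(\nu_1+\cdots+\nu_k)}$ to produce the exponent $2(\nu_1+\cdots+\nu_k)-\max(\nu_1,\ldots,\nu_k)$ in $C_k$.
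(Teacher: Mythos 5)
Your derivation is exactly the one the paper intends: the corollary is stated there without a separate proof, as an immediate specialization of Theorem \ref{Th_class_r} to $f(n)=n\in{\cal A}_1$ with $r=1$, and your verification that $n\in{\cal A}_1$, your identification of the main term $x^{2}\overline{Q}_{k-1}(\log x)$, and your simplification of \eqref{C_f_k} to $C_k$ (absorbing $p^{\max(\nu_1,\ldots,\nu_k)}$ into the denominator $p^{2(\nu_1+\cdots+\nu_k)}$) are all correct.

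The one step that deserves scrutiny is the passage from $O(x^{1+\theta_k+\varepsilon})$ to $O(x^{3/2+\varepsilon})$. You justify $\theta_k=1/2$ only for $k=2,3$, while the corollary is asserted for every $k\ge 2$. In fact $\theta_k=1/2$ is an admissible exponent in \eqref{Piltz_form} for $k\le 4$ (Titchmarsh, Theorem 12.3 gives $\theta_k=\tfrac{3k-4}{4k}$, which equals $1/2$ at $k=4$), but for $k\ge 5$ no unconditional exponent as small as $1/2$ is known --- $\theta_k=1/2$ for all $k$ is equivalent to the Lindel\"of hypothesis --- so for $k\ge 5$ your argument only delivers $O(x^{1+\theta_k+\varepsilon})$ with $1+\theta_k>3/2$. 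This limitation is inherited from the corollary as stated rather than introduced by your reduction (the paper's own Theorem \ref{Th_class_r} carries the error $O(x^{r+\theta_k+\varepsilon})$, and its internal tail estimate contributes $O(x^{r+1/2+\varepsilon})$, so the exponent $3/2$ genuinely requires $\theta_k=1/2$), but as written your proof establishes the displayed error term only for the range of $k$ where such a $\theta_k$ is available.
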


Note that for $k=2$ this was proved by Heyman and T\'oth \cite[Th.\ 2.7]{HeyTot2021} with a better error term, and one has $C_2=\zeta(3)/\zeta(2)$. 

Theorem \ref{Th_class_r} does not apply for the divisor function $\tau(n)$, and we prove the next result.

\begin{theorem} \label{Th_tau} Let $k\ge 2$ be a fixed integer. Then for every $\varepsilon >0$,
\begin{equation} \label{tau_form}
\sum_{n_1\cdots n_k\le x} \tau([n_1,\ldots,n_k]) = x\, Q_{\tau,2k-1} (\log x)
+ O\big(x^{\theta_{2k}+\varepsilon}\big),
\end{equation}
where $Q_{\tau,2k-1}(t)$ is a polynomial in $t$ of degree $2k-1$ with leading coefficient $D_k/(2k-1)!$, the
constant $D_k$ given by 
\begin{equation*} 
D_k= \prod_p \left(1-\frac1{p}\right)^{2k} \sum_{\nu_1,\ldots,
\nu_k=0}^{\infty} \frac{\max(\nu_1,\ldots,\nu_k)+1}{p^{\nu_1 +\cdots +\nu_k}},
\end{equation*}
and $\theta_{2k}\ge 1/2$ being any exponent in the $2k$-factor 
Piltz divisor problem. In particular, one can select $\theta_{2k} = \frac{2k-1}{2k+1}$ \textup{($k\ge 2$)}. 
\end{theorem}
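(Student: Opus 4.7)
My plan is to follow the approach used in the introduction for the sum $\sum_{n_1\cdots n_k\le x}(n_1,\ldots,n_k)$: find a convolution identity expressing the convolute $\widetilde{F}(n):=\sum_{n_1\cdots n_k=n}\tau([n_1,\ldots,n_k])$ as $\widetilde{F}=\tau_{2k}*h$, where the one-variable function $h$ has a Dirichlet series that converges absolutely on $\operatorname{Re}(s)>1/2$, and then combine the Piltz asymptotic \eqref{Piltz_form} (applied with $k$ replaced by $2k$) with a standard hyperbola-type manipulation.

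Since $(n_1,\ldots,n_k)\mapsto\tau([n_1,\ldots,n_k])$ is multiplicative of $k$ variables, $\widetilde{F}$ is a multiplicative function of one variable, and it suffices to control its Euler factors. At a prime $p$ we have
\[
\widetilde{F}(p^a)=\sum_{\nu_1+\cdots+\nu_k=a}\bigl(\max(\nu_1,\ldots,\nu_k)+1\bigr).
\]
Writing $\max(\nu_1,\ldots,\nu_k)+1=\#\{m\ge 0:m\le\max\}$ and applying inclusion--exclusion to the condition $\max\ge m$ leads me to the formula
\[
\sum_{a\ge 0}\widetilde{F}(p^a)x^a=\frac{R(x)}{(1-x)^k},\qquad R(x):=1+\sum_{j=1}^k(-1)^{j+1}\binom{k}{j}\frac{x^j}{1-x^j}.
\]
A brief Taylor expansion at $x=0$ then reveals a clean cancellation of the coefficient of $x$, yielding $(1-x)^kR(x)=1+O(x^2)$. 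This is what allows the factorization
\[
\sum_{n\ge 1}\frac{\widetilde{F}(n)}{n^s}=\zeta(s)^{2k}H(s),\qquad H(s):=\prod_p(1-p^{-s})^kR(p^{-s}),
\]
with $H(s)$ absolutely convergent on $\operatorname{Re}(s)>1/2$. Equivalently, $\widetilde{F}=\tau_{2k}*h$, where the multiplicative function $h$ satisfies $\sum_{d\ge 1}|h(d)|/d^{1/2+\varepsilon}<\infty$.

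With this identity, the hyperbolic sum splits as $\sum_{n\le x}\widetilde{F}(n)=\sum_{d\le x}h(d)\sum_{m\le x/d}\tau_{2k}(m)$. Plugging \eqref{Piltz_form} into the inner sum, expanding $P_{2k-1}(\log(x/d))$ in powers of $\log x$, and completing the outer sum to infinity produces a main term of the form $x\,Q_{\tau,2k-1}(\log x)$, while the tail over $d>x$ and the $O$-term together contribute $O(x^{\theta_{2k}+\varepsilon})$ thanks to $\theta_{2k}\ge 1/2$ and the absolute convergence of $H$ at $s=1/2+\varepsilon$. The leading coefficient of $Q_{\tau,2k-1}$ equals $H(1)/(2k-1)!$, and reading off the Euler product gives $H(1)=\prod_p(1-1/p)^{2k}\sum_{\vec\nu\ge\vec 0}(\max(\vec\nu)+1)p^{-|\vec\nu|}=D_k$, matching the claim.

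The main obstacle is exactly the cancellation producing $(1-x)^kR(x)=1+O(x^2)$. A naive estimate of $R(x)/(1-x)^k$ suggests only a pole of order $k+1$ at $x=1$, which would yield a main polynomial of degree $k$; the extra $\zeta^k$ factor needed to reach degree $2k-1$ is secured entirely by this arithmetic cancellation, and verifying it cleanly for every $k\ge 2$ is the essential computational step. Once in hand, the remainder of the proof is a routine Piltz-type hyperbola manipulation.
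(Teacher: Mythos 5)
Your proof is correct, and it reaches the result by a genuinely different route from the paper's. The paper keeps the $k$-variable structure throughout: it quotes a lemma (from T\'oth--Zhai) asserting that $\sum \tau([n_1,\ldots,n_k]) n_1^{-s_1}\cdots n_k^{-s_k} = \zeta^2(s_1)\cdots\zeta^2(s_k)G_k(s_1,\ldots,s_k)$ with $G_k$ absolutely convergent for $\Re s_j>0$, $\Re(s_j+s_\ell)>1$, writes $\tau([n_1,\ldots,n_k])=\sum \tau(j_1)\cdots\tau(j_k)g_k(d_1,\ldots,d_k)$, collapses the $j$-variables into $\tau_{2k}$, and then must estimate a $k$-fold tail sum $\sum'$ over tuples with some $d_m>x$ using the multivariable region of convergence. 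You instead collapse to one variable at the outset: the convolute $\widetilde F$ is multiplicative, and your explicit inclusion--exclusion computation of the Euler factor $R(x)/(1-x)^k$, together with the cancellation $(1-x)^kR(x)=1+O(x^2)$ (which is correct: $R(x)=1+kx+O(x^2)$ from the $j=1$ term, cancelling against $(1-x)^k=1-kx+O(x^2)$; and since $R$ is a fixed rational function with singularities only on $|x|=1$, the coefficients of $(1-x)^kR(x)$ grow at most linearly, giving absolute convergence of $H(s)$ for $\Re s>1/2$), yields $\widetilde F=\tau_{2k}*h$ with a one-variable $h$. This makes the tail-completion step a routine one-variable estimate and makes the whole argument self-contained, at the cost of the explicit generating-function computation; the paper's version outsources that work to a cited multivariable Dirichlet-series lemma. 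Your identification $H(1)=D_k$ is correct (and checks against the known $k=2$ leading constant $\frac1{\pi^2}\prod_p(1-(p+1)^{-2})$).

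One small conceptual quibble with your closing remark: the degree $2k-1$ of the main polynomial is governed by the order of the pole of the \emph{global} Dirichlet series at $s=1$, i.e.\ by the linear coefficient $\widetilde F(p)=2k$ of each Euler factor, not by the order of the singularity of $R(x)/(1-x)^k$ at $x=1$ (that singularity sits at $s=0$ for each prime and is irrelevant here). The cancellation you verify is exactly the right one, but the ``naive pole of order $k+1$ at $x=1$'' heuristic you contrast it with does not actually bear on the degree of the main term.
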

 
\section{Proofs} \label{Section_Proofs}

Theorem \ref{Th_Wintner_F} is, in fact, a special case of the next result due to Cohen \cite{Coh1961}.

\begin{lemma} \label{Th_Cohen}  Let $f:\N \to \C$ be an arbitrary arithmetic function, let $k\ge 1$, and write 
$f(n)=\sum_{d\delta=n} g(d)\tau_k(\delta)$ \textup{($n\in \N$)}. If
the series $\sum_{n=1}^{\infty} \frac{g(n)}{n}$ is absolutely convergent, then  
\begin{equation*}
\lim_{x\to \infty} \frac1{x(\log x)^{k-1}}  \sum_{n\le x} f(n) = 
\frac1{(k-1)!} \sum_{n=1}^{\infty} \frac{g(n)}{n}.
\end{equation*}
\end{lemma}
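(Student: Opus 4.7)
The strategy is to exploit the identity $f = g * \tau_k$ (Dirichlet convolution) by swapping the order of summation. Writing $T_k(y) := \sum_{\delta \le y} \tau_k(\delta)$, one obtains
\[
\sum_{n \le x} f(n) \;=\; \sum_{d\delta \le x} g(d)\tau_k(\delta) \;=\; \sum_{d \le x} g(d)\, T_k(x/d).
\]
The plan is to split this sum at $d = x/2$, apply the classical Piltz asymptotic $T_k(y) = \frac{y(\log y)^{k-1}}{(k-1)!} + O\bigl(y(\log y)^{k-2}\bigr)$ on the main range $d \le x/2$ (where $x/d \ge 2$), and treat the short range $x/2 < d \le x$ by the trivial bound $T_k(x/d) = O(1)$. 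The case $k=1$ is degenerate but works identically with the bound $T_1(y)=y+O(1)$.

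On the main range, after dividing the resulting leading contribution by $x(\log x)^{k-1}$ one is left with
\[
\frac{1}{(k-1)!} \sum_{d \le x/2} \frac{g(d)}{d}\left(\frac{\log(x/d)}{\log x}\right)^{\!k-1}.
\]
For each fixed $d$ the inner ratio tends to $1$ as $x \to \infty$, and it is uniformly bounded above by $1$ for $d \le x/2$. Since $\sum_{d} |g(d)|/d$ converges absolutely by hypothesis, dominated convergence (with counting measure on $\N$) forces this expression to tend to $\frac{1}{(k-1)!} \sum_{d=1}^{\infty} g(d)/d$, which is precisely the claimed limit.

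It remains to absorb the two error sources into $o\bigl(x(\log x)^{k-1}\bigr)$. The $T_k$-remainder contributes at most $C x (\log x)^{k-2} \sum_{d \le x/2} |g(d)|/d = O\bigl(x(\log x)^{k-2}\bigr)$, and the short range $x/2 < d \le x$ contributes at most $\sum_{x/2 < d \le x} |g(d)| \le x \sum_{d > x/2} |g(d)|/d = o(x)$, because the tail of a convergent series vanishes. The main subtlety is producing the uniform domination required for the dominated convergence step and checking that the boundary range near $d = x$ really is negligible; both points rest squarely on the absolute convergence of $\sum_n g(n)/n$, which is exactly the hypothesis of the lemma.
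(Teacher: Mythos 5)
Your proof is correct and follows essentially the same route as the paper: both start from $\sum_{n\le x} f(n)=\sum_{d\le x} g(d)\,T_k(x/d)$ and insert the elementary estimate $T_k(y)=\frac{1}{(k-1)!}y(\log y)^{k-1}+O\bigl(y(\log y)^{k-2}\bigr)$, with all errors absorbed via the absolute convergence of $\sum_n g(n)/n$. The only real difference is in the final limiting step: where you pass to the limit in $\sum_{d\le x}\frac{g(d)}{d}\bigl(\log(x/d)/\log x\bigr)^{k-1}$ by dominated convergence, the paper expands $(\log(x/d))^{k-1}$ binomially and shows each term $(\log x)^{-j}\sum_{d\le x}|g(d)|(\log d)^j/d$ vanishes by splitting at $d\le x^{\varepsilon}$ --- two presentations of the same absolute-convergence argument.
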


Note that here $g=f*\mu*\cdots *\mu$, taking $k$-times the function $\mu$.

\begin{proof}[Proof of Theorem {\rm \ref{Th_Wintner_F}}] Apply Lemma \ref{Th_Cohen}. Given an arbitrary function 
$F:\N^k \to \C$, choose $f(n)=\widetilde{F}(n):=\sum_{n_1\cdots n_k} F(n_1,\ldots,n_k)$. Then
\begin{equation*}
\widetilde{F}(n)= \sum_{n_1\cdots n_k=n} \sum_{d_1\mid n_1,\ldots,d_k\mid n_k} (\mu*F)(d_1,\ldots,d_k)   
=  \sum_{d_1\delta_1\cdots d_k\delta_k=n} (\mu*F)(d_1,\ldots,d_k),
\end{equation*}
that is,
\begin{equation} \label{F}
\widetilde{F}(n) = \sum_{d\delta=n} g(d) \tau_k(\delta),   
\end{equation}
where 
\begin{equation*} 
g(d) = \sum_{d_1\cdots d_k=d} (\mu*F)(d_1,\ldots,d_k),
\end{equation*}
and
\begin{equation} \label{series_g}
\sum_{n=1}^{\infty} \frac{g(n)}{n} =  \sum_{n_1,\ldots,n_k=1}^{\infty} \frac{(\mu*F)(n_1,\ldots,n_k)}{n_1\cdots n_k},
\end{equation}
finishing the proof.
\end{proof}

For the sake of completeness we also present a proof of Lemma \ref{Th_Cohen}, which is different from the proofs
given by Cohen \cite{Coh1961} and Narkiewicz \cite{Nar1963}.

\begin{proof}[Proof of Lemma {\rm \ref{Th_Cohen}}] Assume that $k\ge 2$. We use the estimate 
\begin{equation} \label{element_est_tau_k}
\sum_{n\le x} \tau_k(n)= \frac1{(k-1)!}x (\log x)^{k-1} + O(x(\log x)^{k-2}),
\end{equation}
see, e.g., Nathanson \cite[Th.\ 7.6]{Nat2000} for an elementary proof by induction on $k$.
Note that this is sufficient here, the complete formula \eqref{Piltz_form} is not needed.

According to \eqref{element_est_tau_k},
\begin{equation*}
S_f(x):= \sum_{n\le x} f(n) = \sum_{d\le x} g(d) \sum_{\delta \le x/d} \tau_k(\delta)    
\end{equation*}
\begin{equation*}
 = \frac{x}{(k-1)!} \sum_{d\le x} \frac{g(d)}{d} \left(\log \frac{x}{d}\right)^{k-1} + O\Big(x(\log x)^{k-2}
 \sum_{d\le x} \frac{|g(d)|}{d}\Big),    
\end{equation*}
where the $O$-term is $O(x(\log x)^{k-2})$ by the absolute convergence of the series $\sum_{n=1}^{\infty} \frac{g(n)}{n}$.

We deduce that
\begin{equation} \label{S_f_x}
\frac{(k-1)! S_f(x)}{x(\log x)^{k-1}} =  \sum_{d\le x} \frac{g(d)}{d} +  \sum_{j=1}^{k-1}
(-1)^j \binom{k-1}{j} (\log x)^{-j} \sum_{d\le x} \frac{g(d)}{d} (\log d)^j +
 O\left((\log x)^{-1}\right).
\end{equation}

Now for every $j$ ($1\le j \le k-1$) and for a small $\varepsilon >0$ we split the following sum in two parts:
\begin{equation*}
\sum_{d\le x} \frac{|g(d)|}{d} (\log d)^j  =  \sum_{d\le x^{\varepsilon}} \frac{|g(d)|}{d} (\log d)^j +
\sum_{x^{\varepsilon} < d\le x} \frac{|g(d)|}{d} (\log d)^j
\end{equation*}
\begin{equation*}
\le (\varepsilon \log x)^j \sum_{d=1}^{\infty} \frac{|g(d)|}{d}  + (\log x)^j  
\sum_{x^{\varepsilon} < d } \frac{|g(d)|}{d}.
\end{equation*}

Hence
\begin{equation*}
(\log x)^{-j} \sum_{d\le x} \frac{|g(d)|}{d} (\log d)^j  \le \varepsilon^j \sum_{d=1}^{\infty} \frac{|g(d)|}{d}  +  
\sum_{x^{\varepsilon} < d } \frac{|g(d)|}{d},
\end{equation*}
where the first term is arbitrary small if $\varepsilon$ is small, and the second term is also arbitrary small if $x$ is large enough (by the definition of convergent series). Now \eqref{S_f_x} shows that
\begin{equation*} 
\lim_{x\to \infty} \frac{(k-1)! S_f(x)}{x(\log x)^{k-1}} =  \sum_{d=1}^{\infty} \frac{g(d)}{d},
\end{equation*}
and the proof is complete.
\end{proof}

\begin{proof}[Proof of Theorem {\rm \ref{Cor_Wintner_f}}]
If $F(n_1,\ldots,n_k)= f((n_1,\ldots,n_k))$, then \eqref{F} and \eqref{G_form} show that
\begin{equation*}
\widetilde{F}(n)= \sum_{d\delta=n} g(d)\tau_k(\delta), 
\end{equation*}
where
\begin{equation*}
g(n)= \begin{cases} (\mu*f)(m), & \text{ if $n=m^k$}, \\ 0, & \text{ otherwise}.  
\end{cases} 
\end{equation*}

Hence, for $k\ge 2$, by \eqref{series_g},
\begin{equation*}
\sum_{n_1,\ldots,n_k=1}^{\infty} \frac{(\mu*F)(n_1,\ldots,n_k)}{n_1\cdots n_k} = \sum_{n=1}^{\infty} \frac{g(n)}{n}= \sum_{m=1}^{\infty} \frac{(\mu*f)(m)}{m^k} = \frac1{\zeta(k)} 
\sum_{m=1}^{\infty} \frac{f(m)}{m^k},
\end{equation*}
and the result now follows on applying Theorem \ref{Th_Wintner_F}.
\end{proof}

\begin{proof}[Proof of Theorem {\rm \ref{Th_gcd}}] Consider the function $f(n)=\log n$.
Note that $\mu *\log =\Lambda$ is the von Mangoldt function, 
defined by
\begin{equation*}
\Lambda(n)= 
\begin{cases} \log p, & \text{ if $n=p^\nu$ ($\nu \ge 1$)},\\ 0, & \text{ otherwise}.
\end{cases}    
\end{equation*}

We deduce by identity \eqref{G_form} that
\begin{equation} \label{S_log_k}
S_{\log,k}(x):= \sum_{n_1\cdots n_k\le x} \log (n_1,\ldots, n_k) = \sum_{d^k\delta \le x} \Lambda(d) \tau_k(\delta) =
\sum_{p^{\nu k}\delta \le x} (\log p) \tau_k(\delta).
\end{equation}

We remark that for the functions $\omega(n)$ and $\Omega(n)$ one has 
\begin{equation*}
(\mu * \omega)(n) =
\begin{cases} 1, & \text{ if $n=p$},\\ 0, & \text{ otherwise},
\end{cases}    
\end{equation*}
\begin{equation*}
\sum_{n_1\cdots n_k\le x} \omega ((n_1,\ldots, n_k)) = \sum_{p^k\delta \le x} \tau_k(\delta),
\end{equation*}
respectively
\begin{equation*}
(\mu * \Omega)(n) =
\begin{cases} 1, & \text{ if $n=p^\nu$ ($\nu \ge 1$)},\\ 0, & \text{ otherwise},
\end{cases}    
\end{equation*}
\begin{equation*}
\sum_{n_1\cdots n_k\le x} \Omega ((n_1,\ldots, n_k)) = \sum_{p^{\nu k} \delta \le x} \tau_k(\delta).
\end{equation*}

We present the details of the proof only for the function $f(n)=\log n$.
In the cases $f(n)=\omega(n)$ and $f(n)=\Omega(n)$ the used arguments are similar. 
 
By \eqref{S_log_k} and \eqref{Piltz_form},
\begin{equation*} 
S_{\log,k}(x)= \sum_{p^{\nu} \le x^{1/k}} (\log p) \sum_{\delta \le x/p^{\nu k}}\tau_k(\delta) 
\end{equation*}
\begin{equation} \label{S_log_main}
= \sum_{p^{\nu} \le x^{1/k}} (\log p) \left(\frac{x}{p^{\nu k}} P_{k-1}\Big(\log \frac{x}{p^{\nu k}}\Big) 
+ O\Big(\Big(\frac{x}{p^{\nu k}}\Big)^{\theta_k+\varepsilon}\Big) \right).
\end{equation}

The error term $R_k(x)$ from \eqref{S_log_main} is
\begin{equation*} 
R_k(x) \ll x^{\theta_k+\varepsilon} \sum_{p^{\nu} \le x^{1/k}} \frac{\log p}{p^{\nu k(\theta_k+\varepsilon)}} \ll
x^{\theta_k+\varepsilon} \sum_{p \le x^{1/k}} (\log p) \sum_{\nu=1}^{\infty} \frac1{p^{\nu k(\theta_k+\varepsilon)}} 
\end{equation*}
\begin{equation*}
\ll x^{\theta_k+\varepsilon} \sum_{p \le x^{1/k}} \frac{\log p}{p^{k(\theta_k+\varepsilon)}},
\end{equation*}
that is, by assuming $\theta_k\ge 1/k$,
\begin{equation} \label{R_k}
R_k(x) \ll x^{\theta_k+\varepsilon}.
\end{equation}

Let $P_{k-1}(t)= \sum_{j=0}^{k-1} a_j t^j$. The main term $M_k(x)$ in \eqref{S_log_main} is 
\begin{equation*}
M_k(x)= x \sum_{p^\nu \le x^{1/k}} \frac{\log p}{p^{\nu k}} \sum_{j=0}^{k-1} a_j \left(\log \frac{x}{p^{\nu k}} \right)^j
\end{equation*}
\begin{equation*}
= x \sum_{j=0}^{k-1} a_j \sum_{t=0}^j (-k)^t \binom{j}{t} (\log x)^{j-t} \sum_{p^\nu \le x^{1/k}} \frac{\nu^t (\log p)^{t+1}}{p^{\nu k}},
\end{equation*}
and for any fixed $t$ the inner sum $I_{k,t}(x)$ is, by denoting $m_k=\lfloor \frac{\log x}{k\log p} \rfloor$,
\begin{equation*}
I_{k,t}(x):= \sum_{p^\nu \le x^{1/k}} \frac{\nu^t (\log p)^{t+1}}{p^{\nu k}} =\sum_{p\le x^{1/k}} (\log p)^{t+1} \sum_{\nu =1}^{m_k} \frac{\nu^t}{p^{\nu k}} 
\end{equation*}
\begin{equation*}
= \sum_{p\le x^{1/k}} (\log p)^{t+1} \left( \sum_{\nu =1}^{\infty} \frac{\nu^t}{p^{\nu k}} - \sum_{\nu \ge m_k+1} \frac{\nu^t}{p^{\nu k}}\right) 
\end{equation*}
\begin{equation*}
= \sum_p (\log p)^{t+1} \sum_{\nu =1}^{\infty} \frac{\nu^t}{p^{\nu k}}   
- \sum_{p> x^{1/k}} (\log p)^{t+1} \sum_{\nu =1}^{\infty} \frac{\nu^t}{p^{\nu k}}   
- \sum_{p\le x^{1/k}} (\log p)^{t+1} \sum_{\nu \ge m_k+1} \frac{\nu^t}{p^{\nu k}}. 
\end{equation*}

We note that
\begin{equation} \label{series_id}
\sum_{m=1}^{\infty} m^t x^m = \frac{x}{(1-x)^{t+1}} \psi_{t-1}(x)  \quad (t\in \N, |x|<1),
\end{equation}
where $\psi_{t-1}(x)$ is a polynomial in $x$ of degree $t-1$, more exactly $\psi_{t-1}(x)= 
\sum_{m=0}^{t-1} \left\langle {t \atop m} \right\rangle x^m$. Here $\langle {t \atop m} \rangle $ denote the (classical) Eulerian numbers, 
defined as the number of permutations $h\in S_n$ with $k$ descents 
(a number $i$ is called a descent of $h$ if $h(i) > h(i + 1)$).  
See, e.g., Petersen \cite[Ch.\ 1]{Pet2015}.
Hence for every $t\ge 1$, 
\begin{equation*}
\sum_p (\log p)^{t+1} \sum_{\nu =1}^{\infty} \frac{\nu^t}{p^{\nu k}} = 
\sum_p (\log p)^{t+1} \frac{1/p^k}{(1-1/p^k)^{t+1}} 
\sum_{m=0}^{t-1} \left\langle {t \atop m} \right\rangle \frac1{p^{m k}} 
\end{equation*}
\begin{equation*}
= \sum_{m=0}^{t-1} \left\langle {t \atop m} \right\rangle \sum_p (\log p)^{t+1} \frac{1/p^{k(m+1)}}{(1-1/p^k)^{t+1}}:=b_{k,t},
\end{equation*}
a constant (depending on $k$ and $t$),  since 
\begin{equation*}
\sum_p (\log p)^{t+1} \frac{1/p^{k(m+1)}}{(1-1/p^k)^{t+1}}\le \frac1{(1-1/2^k)^{t+1}}  \sum_p \frac{(\log p)^{t+1}}{p^{k(m+1)}},   
\end{equation*}
and the latter series converges for every $t,m\ge 0$ (since $k\ge 3$).

It is a consequence of \eqref{series_id} that for fixed $t,k\in \N$ we have the estimate
\begin{equation} \label{estimate_1} 
\sum_{\nu=1}^{\infty} \frac{\nu^t}{p^{\nu k}}  \ll \frac1{p^k} \quad \text{ as $p\to \infty$}.
\end{equation}

More generally, for fixed $t,k\in \N$ and $x\ge 1$ real,
\begin{equation} \label{est_gen}
\sum_{\nu \ge x} \frac{\nu ^t}{p^{\nu k}} \ll \frac{x^t}{p^{kx}} \quad \text{ as $p\to \infty$}, 
\end{equation}
uniformly for $p$ and $x$. See \cite[Lemma 4.5]{BorTot2022}, proved by some different arguments.

We also need the estimate 
\begin{equation} \label{estimate_prime}
\sum_{p> x} \frac{(\log p)^\eta}{p^s} \ll \frac{(\log x)^{\eta-1}}{x^{s-1}} \quad \text{ as $x\to \infty$}, 
\end{equation}
where $\eta\ge 0$ and $s>1$ are fixed real numbers. See \cite[Lemma 3.3]{HeyTot2021}.

Using \eqref{estimate_1} and \eqref{estimate_prime} we have
\begin{equation*}
\sum_{p> x^{1/k}} (\log p)^{t+1} \sum_{\nu =1}^{\infty} \frac{\nu^t}{p^{\nu k}} \ll
\sum_{p> x^{1/k}} \frac{(\log p)^{t+1}}{p^k} \ll \frac{(\log x)^t}{x^{1-1/k}}.
\end{equation*}

Furthermore, by \eqref{est_gen},
\begin{equation*}
\sum_{\nu \ge m_k+1} \frac{\nu^t}{p^{\nu k}}\ll \frac{(\log x)^t}{x(\log p)^t},
\end{equation*}
hence
\begin{equation*}
\sum_{p\le x^{1/k}} (\log p)^{t+1} \sum_{\nu \ge m_k+1} \frac{\nu^t}{p^{\nu k}}\ll
\frac{(\log x)^t}{x} \sum_{p\le x^{1/k}} \log p \ll \frac{(\log x)^{t+1}}{x} \sum_{p\le x^{1/k}} 1
\ll \frac{(\log x)^t}{x^{1-1/k}},
\end{equation*}
by using the estimate $\pi(x):= \sum_{p\le x} 1  \ll \frac{x}{\log x}$.

We deduce that
\begin{equation*}
I_{k,t}(x)= b_{k,t} + O(x^{1/k-1} (\log x)^t),     
\end{equation*}
which also holds for $t=0$, and
\begin{equation*}
M_k(x)= xP_{\log,k-1}(\log x) + O(x^{1/k} (\log x)^{k-1}),     
\end{equation*}
where $P_{\log, k-1}(\log x)$ is a polynomial in $\log x$ of degree $k-1$. 

Comparing to \eqref{R_k}, the final error term is $\ll x^{\theta_k+\varepsilon}$.
This finishes the proof. 
\end{proof}

To prove Theorem \ref{Th_class_r} we quote the following Lemma.

\begin{lemma} \label{Lemma_class_f}
If $k\ge 2$ and $f\in {\cal A}_r$ with $r\ge 0$ real, then
\begin{equation*}
\sum_{n_1,\ldots,n_k=1}^{\infty}
\frac{f([n_1,\ldots,n_k])}{n_1^{s_1}\cdots n_k^{s_k}}
= \zeta(s_1-r)\cdots \zeta(s_k-r)
H_{f,k}(s_1,\ldots,s_k),
\end{equation*}
where the multiple Dirichlet series 
\begin{equation*}
H_{f,k}(s_1,\ldots,s_k) = \sum_{n_1,\ldots, n_k=1}^{\infty} \frac{h_{f,k}(n_1,\ldots, n_k)}{n_1^{s_1}\cdots n_k^{s_k}}
\end{equation*}
is absolutely convergent for $\Re s_1,\ldots,\Re s_k > r+1/2$.
\end{lemma}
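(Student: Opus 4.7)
I would establish an Euler product for the multiple Dirichlet series, extract the $k$ zeta factors $\zeta(s_j-r)$, and estimate what remains uniformly in $p$. Since $f$ is multiplicative and the LCM respects coprime decompositions, $F(n_1,\ldots,n_k) := f([n_1,\ldots,n_k])$ is multiplicative as a function of $k$ variables; hence, in any half-plane of absolute convergence,
$$\sum_{n_1,\ldots,n_k \ge 1} \frac{f([n_1,\ldots,n_k])}{n_1^{s_1}\cdots n_k^{s_k}} = \prod_p L_p(s_1,\ldots,s_k), \qquad L_p := \sum_{\nu_1,\ldots,\nu_k \ge 0} \frac{f(p^{\max(\nu_1,\ldots,\nu_k)})}{p^{\nu_1 s_1 + \cdots + \nu_k s_k}}.$$

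Next I would set $E_p := L_p \cdot \prod_{j=1}^k (1 - p^{r-s_j})$ and estimate it by writing $f(p^\nu) = p^{\nu r} + g_p(\nu)$, where the conditions defining ${\cal A}_r$ yield $g_p(0) = 0$, $|g_p(1)| \le C_1\, p^{r-1/2}$, and $|g_p(\nu)| \le (C_2+1)\, p^{\nu r}$ for $\nu \ge 2$. For the model part $f(p^\nu) = p^{\nu r}$, a direct computation---stratifying the sum by the level sets $\max(\nu_1,\ldots,\nu_k) = M$ and summing geometric series---produces a rational function in the $p^{-s_j}$ whose only simple poles at $s_j = r$ are captured precisely by $\prod_j (1 - p^{r-s_j})^{-1}$, with residual singularities confined to hyperplanes $s_{i_1} + \cdots + s_{i_m} = mr$ for $m \ge 2$. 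After multiplication by $\prod_j (1 - p^{r-s_j})$ this makes the model contribution to $E_p$ equal to $1 + O(p^{2r-2\sigma})$, with $\sigma := \min_j \Re s_j$. The perturbation $g_p(\nu)$ contributes $O(p^{r-1/2-\sigma})$ from $\nu = 1$ and $O(p^{2r-2\sigma})$ from $\nu \ge 2$; both bounds survive multiplication by the $O(1)$ factor $\prod_j (1 - p^{r-s_j})$ (valid for $\sigma > r$). Collecting,
$$E_p = 1 + O(p^{r-1/2-\sigma}) + O(p^{2r-2\sigma})$$
uniformly in $p$.

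Setting $H_{f,k} := \prod_p E_p$ then realizes the required factorization. In the region $\sigma > r + 1/2$ both error exponents are strictly less than $-1$, so $\sum_p |E_p - 1| < \infty$ and the Euler product for $H_{f,k}$ converges absolutely; expanding each local factor as a multiple power series in the $p^{-s_j}$ and multiplying recovers the Dirichlet coefficients $h_{f,k}(n_1,\ldots,n_k)$, yielding the claimed absolutely convergent series expansion throughout $\Re s_j > r + 1/2$.

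The main obstacle I anticipate is the model computation: one must verify that, after removing the $k$ expected simple poles at $s_j = r$, the residual Euler factor decays like $p^{2r-2\sigma}$ rather than merely $p^{r-\sigma}$, so that absolute convergence is gained on the half-plane $\Re s_j > r + 1/2$ rather than only $\Re s_j > r + 1$. I expect this to follow from the symmetry of the stratification together with the inclusion-exclusion identity $\max(\nu_1,\ldots,\nu_k) = \sum_{\emptyset \ne I \subseteq \{1,\ldots,k\}} (-1)^{|I|+1} \min_{i \in I} \nu_i$; for $k = 2$ a short geometric-series calculation gives the residual factor $(1 - p^{2r-s_1-s_2})/(1 - p^{r-s_1-s_2})$, whose quadratic decay confirms the expected pattern for general $k$.
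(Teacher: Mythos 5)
Your argument is correct in outline, but it is worth noting that the paper does not actually prove this lemma: its ``proof'' is a citation of \cite[Lemma 3.1]{HilTot2016}, plus the remark that $f([n_1,\ldots,n_k])$ is multiplicative in $k$ variables so that the series has an Euler product. What you have written is essentially a self-contained reconstruction of the cited result, by the natural route: factor out $\prod_j\zeta(s_j-r)$ locally, treat $f(p^\nu)=p^{\nu r}+g_p(\nu)$ as model plus perturbation, and verify that each local residual factor is $1+O(p^{r-1/2-\sigma})+O(p^{2r-2\sigma})$. Your estimates for the perturbation are right ($g_p(0)=0$, $|g_p(1)|\le C_1p^{r-1/2}$, $|g_p(\nu)|\ll p^{\nu r}$ for $\nu\ge 2$), and your $k=2$ model computation, which yields the residual factor $(1-p^{2r-s_1-s_2})/(1-p^{r-s_1-s_2})$, is the correct prototype for general $k$ (for the ordered sum over $\nu_1\ge\cdots\ge\nu_k$ one gets $\prod_{j}(1-p^{r-s_1-\cdots-s_j})^{-1}$, so the residual singular hyperplanes are $\sum_{i\in I}s_i=r$ with $|I|\ge 2$, not $\sum_{i\in I}s_i=|I|r$ as you wrote; this slip is harmless since both families of hyperplanes miss the region $\Re s_j>r+1/2$, and your own $k=2$ denominator already displays the correct form).

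One step you should tighten: bounding $\sum_p|E_p-1|$ gives convergence of the Euler product $\prod_pE_p$, but the lemma asserts \emph{absolute convergence of the Dirichlet series} for $H_{f,k}$, i.e.\ finiteness of $\prod_p\sum_{\nu}|h_{f,k}(p^{\nu_1},\ldots,p^{\nu_k})|p^{-(\nu_1\Re s_1+\cdots+\nu_k\Re s_k)}$, and in principle cancellation inside $E_p$ could hide large coefficients. The fix stays entirely within your framework: from the convolution identity defining $h_{f,k}$ (convolving $f([\cdot,\ldots,\cdot])$ with $k$ copies of the inverse of $n\mapsto n^r$) one gets directly $|h_{f,k}(p^{\nu_1},\ldots,p^{\nu_k})|\le C_1p^{r-1/2}$ when $\nu_1+\cdots+\nu_k=1$ and $|h_{f,k}(p^{\nu_1},\ldots,p^{\nu_k})|\le 2^kCp^{r(\nu_1+\cdots+\nu_k)}$ in general, whence the local sum of absolute values is $1+O(p^{r-1/2-\sigma})+O(p^{2r-2\sigma})=1+O(p^{-1-\delta})$ for $\sigma>r+1/2$. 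With that coefficient-level version of your estimates in place, the proof is complete and matches what the cited lemma provides.
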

 
\begin{proof}[Proof of Lemma {\rm \ref{Lemma_class_f}}] 
This is a part of \cite[Lemma.\ 3.1]{HilTot2016}. Note that if $f\in {\cal A}_r$, then the function $f([n_1,\ldots,n_k])$
is multiplicative and its multiple Dirichlet series can be expanded into an Euler product.
\end{proof}

\begin{proof}[Proof of Theorem {\rm \ref{Th_class_r}}]
 By Lemma \ref{Lemma_class_f} we deduce that if $f\in {\cal A}_r$, then
\begin{equation*}
f([n_1,\ldots,n_k]) =\sum_{j_1 d_1=n_1,\ldots,j_k d_k=n_k}
(j_1 \cdots j_k)^r h_{f,k}(d_1,\ldots,d_k),
\end{equation*}
and 
\begin{equation*}
\sum_{n_1\cdots n_k=n} f([n_1,\ldots,n_k]) =  \sum_{j_1d_1 \cdots j_kd_k=n} 
(j_1\cdots j_k)^r h_{f,k}(d_1,\ldots,d_k)
\end{equation*}
\begin{equation*}
= \sum_{jd_1 \cdots d_k=n} h_{f,k}(d_1,\ldots,d_k) j^r \sum_{j_1\cdots j_k=j} 1
= \sum_{jd_1\cdots d_k =n} h_{f,k}(d_1,\ldots,d_k) j^r\tau_{k}(j).
\end{equation*}

Hence
\begin{equation*}
V:= \sum_{n_1\cdots n_k \le x} f([n_1,\ldots,n_k]) =  \sum_{jd_1 \cdots d_k \le x}  h_{f,k}(d_1,\ldots,d_k) j^r\tau_k(j)
\end{equation*}
\begin{equation*}
= \sum_{d_1,\ldots,d_k \le x} h_{f,k}(d_1,\ldots,d_k)  \sum_{j \le x/(d_1\cdots d_k)} j^r\tau_{k}(j).
\end{equation*}

Now by partial summation we deduce from \eqref{Piltz_form} that
\begin{equation*}  
\sum_{n\le x} n^r\tau_k(n) = x^{r+1} T_{k-1}(\log x) + O(x^{r+\theta_k+\varepsilon}),
\end{equation*} 
for every $\varepsilon>0$, where $T_{k-1}(t)$ is a polynomial
in $t$ of degree $k-1$, with leading coefficient $1/(r+1)(k-1)!$. 
This gives that
\begin{equation} \label{est_V}
V=  \sum_{d_1,\ldots,d_k \le x} h_{f,k}(d_1,\ldots,d_k) \Big(\Big(\frac{x}{d_1\cdots d_k}\Big)^{r+1} T_{k-1}\Big(\log \frac{x}{d_1\cdots d_k}\Big) + 
O\Big(\Big(\frac{x}{d_1\cdots d_k}\Big)^{r+\theta_{k} + \varepsilon}\Big) \Big),
\end{equation}
and the error from \eqref{est_V} is, by selecting any $\theta_k\ge 1/2$, 
\begin{equation*}
\ll x^{r+\theta_k +\varepsilon}  \sum_{d_1,\ldots,d_k=1}^{\infty}  \frac{|h_{f,k}(d_1,\ldots,d_k)|}{(d_1\cdots d_k)^{r+\theta_k+\varepsilon}}
\ll x^{r+\theta_k +\varepsilon},
\end{equation*}
where the series converges by Lemma \ref{Lemma_class_f}.

Let $T_{k-1}(t)= \sum_{j=0}^{k-1} b_j t^j$. Then the main term in \eqref{est_V} is
\begin{equation*}
x^{r+1} \sum_{j=0}^{k-1} b_j \sum_{d_1,\ldots,d_k \le x} \frac{h_{f,k}(d_1,\ldots,d_k)}{(d_1\cdots d_k)^{r+1}} \Big(\log \frac{x}{d_1\cdots d_k}\Big)^j
\end{equation*}
\begin{equation*}
= x^{r+1} \sum_{j=0}^{k-1} b_j \sum_{d_1,\ldots,d_k \le x} \frac{h_{f,k}(d_1,\ldots,d_k)}{(d_1\cdots d_k)^{r+1}} \sum_{t=0}^j 
(-1)^t \binom{j}{t} (\log x)^{j-t} (\log (d_1\cdots d_k))^t
\end{equation*}
\begin{equation*}
= x^{r+1} \sum_{j=0}^{k-1} b_j \sum_{t=0}^j (-1)^t \binom{j}{t} (\log x)^{j-t}  
\sum_{d_1,\ldots,d_k \le x} \frac{h_{f,k}(d_1,\ldots,d_k)(\log (d_1\cdots d_k))^t}{(d_1\cdots d_k)^{r+1}}.
\end{equation*}

Write (for a fixed $t$)
\begin{equation*}
\sum_{d_1,\ldots,d_k \le x} \frac{h_{f,k}(d_1,\ldots,d_k)(\log (d_1\cdots d_k))^t}{(d_1\cdots d_k)^{r+1}} 
\end{equation*}
\begin{equation*}
= \left(\sum_{d_1,\ldots,d_k=1}^{\infty} - \sideset{}{'} 
\sum_{d_1,\ldots,d_k}\right)  \frac{h_{f,k}(d_1,\ldots,d_k)(\log (d_1\cdots d_k))^t}{(d_1\cdots d_k)^{r+1}},
\end{equation*}
where $\sum^{'}$ means that $d_1,\ldots,d_k\le x$ does not hold, that is, there exists at least one 
$m$ ($1\le m\le k$) such that $d_m> x$.

Here the multiple series over $d_1,\ldots,d_k$ is convergent by Lemma \ref{Lemma_class_f} and by using that $\log (d_1\cdots d_k)\ll (d_1\cdots d_k)^{\delta}$ for every $\delta>0$. 
Now, to estimate the sum $\sum^{'}$ we can assume, without loss of generality, that $m=1$. 
We obtain that for every $0<\varepsilon <1/2$,
\begin{equation*}
\sideset{}{'}  \sum_{\substack{d_1,\ldots,d_k\\ d_1>x}}  
\frac{|h_{f,k}(d_1,\ldots,d_k)|(\log (d_1\cdots d_k))^t}{(d_1\cdots d_k)^{r+1}} 
\le \sideset{}{'} \sum_{\substack{d_1,\ldots,d_k\\ d_1>x}}  
\frac{|h_{f,k}(d_1,\ldots,d_k)|(d_1\cdots d_k)^{\delta t}}{(d_1\cdots d_k)^{r+1}}\left(\frac{d_1}{x}\right)^{1/2-\varepsilon} 
\end{equation*}
\begin{equation*}
\le x^{\varepsilon -1/2}  \sum_{d_1,\ldots,d_k=1}^{\infty}  
\frac{|h_{f,k}(d_1,\ldots,d_k)|}{d_1^{r+1/2+\varepsilon-\delta t}d_2^{r+1-\delta t} \cdots d_k^{r+1-\delta t}}\ll 
x^{\varepsilon -1/2},
\end{equation*}
if we choose $\delta$ such that $0<\delta < \frac{\varepsilon}{2t}$, where $t\ge 1$ (for $t=0$ one can choose any $\delta>0$), 
since the last series converges by Lemma \ref{Lemma_class_f}. We obtain the final error $x^{r+1} \cdot x^{\varepsilon-1/2}(\log x)^{k-1}$, which is $x^{r+1/2+\varepsilon}$. 
\end{proof}

To prove Theorem \ref{Th_tau} we need the following lemma.

\begin{lemma} \label{Lemma_tau} Let $k\ge 2$. Then
\begin{equation*}
\sum_{n_1,\ldots, n_k=1}^{\infty} \frac{\tau([n_1,\ldots, n_k])}{n_1^{s_1}\cdots n_k^{s_k}} = \zeta^2(s_1)\cdots
\zeta^2(s_k) G_k(s_1,\ldots,s_k),
\end{equation*}
where
\begin{equation*}
G_k(s_1,\ldots,s_k) = \sum_{n_1,\ldots, n_k=1}^{\infty} \frac{g_k(n_1,\ldots, n_k)}{n_1^{s_1}\cdots n_k^{s_k}}
\end{equation*}
is absolutely convergent provided that $\Re s_j >0$ 
\textup{($1\le j\le k$)} and $\Re(s_j+s_{\ell})>1$ 
\textup{($1\le j<\ell \le k$)}.
\end{lemma}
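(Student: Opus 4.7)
The plan is to prove the lemma by expanding the multiple Dirichlet series as an Euler product, factoring out the expected $\zeta^2(s_j)$ poles, and showing that the remaining Euler product is absolutely convergent in the stated region. Since $\tau$ is multiplicative and the LCM is a multiplicative operation on coprime tuples of $k$ integers, the function $(n_1,\ldots,n_k)\mapsto\tau([n_1,\ldots,n_k])$ is multiplicative on $\N^k$, and the series on the left factors as $\prod_p F_p$ with local factor
\[
F_p(s_1,\ldots,s_k)=\sum_{\nu_1,\ldots,\nu_k\ge 0}\frac{\max(\nu_1,\ldots,\nu_k)+1}{p^{\nu_1 s_1+\cdots+\nu_k s_k}}.
\]
Setting $y_j:=p^{-s_j}$ and defining $L_p:=\prod_{j=1}^k(1-y_j)^2\,F_p$, the identity $G_k=\prod_p L_p$ holds by construction, matching the factorization of the lemma.

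The core analytic point is a uniform bound $|L_p-1|\ll_k p^{-1-\delta}$ on compact subsets of the region $\{\Re s_j>0,\ \Re(s_j+s_\ell)>1\}$, with $\delta>0$ depending on the compactum. The key observation is: setting $y_i=0$ for all $i\ne j_0$ collapses the inner sum to $\sum_{\nu\ge 0}(\nu+1)y_{j_0}^\nu=(1-y_{j_0})^{-2}$, which cancels the surviving prefactor $(1-y_{j_0})^2$, so $L_p=1$. Hence $L_p-1$, as a power series in the $y_j$, has no monomials supported on a single variable. To turn this into a quantitative estimate, I would use the inclusion–exclusion identity
\[
\max(\nu_1,\ldots,\nu_k)+1=\sum_{\emptyset\ne I\subseteq\{1,\ldots,k\}}(-1)^{|I|+1}\bigl(\min_{i\in I}\nu_i+1\bigr)
\]
(the classical min–max formula together with $\sum_{\emptyset\ne I}(-1)^{|I|+1}=1$), which gives the closed form
\[
F_p=\frac{1}{\prod_j(1-y_j)}\sum_{\emptyset\ne I\subseteq\{1,\ldots,k\}}\frac{(-1)^{|I|+1}}{1-\prod_{i\in I}y_i}, \qquad L_p=\prod_j(1-y_j)\sum_{\emptyset\ne I}\frac{(-1)^{|I|+1}}{1-\prod_{i\in I}y_i}.
\]
Splitting into $|I|=1$ and $|I|\ge 2$ parts and using $(1-u)^{-1}=1+u/(1-u)$, the constant contributions combine via $\sum_{\emptyset\ne I}(-1)^{|I|+1}=1$ to leave $L_p-1$ as a sum of terms each proportional to $\prod_{i\in I}y_i$ with $|I|\ge 2$; since $|\prod_Iy_i|\le |y_iy_\ell|=p^{-\Re(s_i+s_\ell)}\le p^{-1-\delta}$ for any pair $i,\ell\in I$, the desired bound on $|L_p-1|$ follows for $p$ large.

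Finally, to upgrade the Euler-product estimate to absolute convergence of the multi-variable Dirichlet series $G_k=\sum g_k(\vec n)/\vec n^{\vec s}$, I would use multiplicativity of $g_k$ to reduce to showing $\sum_p\sum_{\vec\nu\ne 0}|g_k(p^{\vec\nu})|p^{-\sum\nu_j\sigma_j}<\infty$. Writing $g_k$ as the $k$-variable convolution of $\tau\circ[\cdot]$ with $(\mu*\mu)^{\otimes k}$ and using $(\mu*\mu)(p^a)=0$ for $a\ge 3$ gives the crude bound $|g_k(p^{\vec\nu})|\ll_k\max(\vec\nu)+1$, while the axis-restriction argument above forces $g_k(p^{\vec\nu})=0$ whenever $\vec\nu$ has at most one nonzero component. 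A splitting-by-support estimate parallel to the one for $|L_p-1|$ then yields $\sum_{\vec\nu\ne 0}|g_k(p^{\vec\nu})|p^{-\sum\nu_j\sigma_j}\ll_k\sum_{i<j}p^{-(\sigma_i+\sigma_j)}$, which sums over $p$ in the region. The main obstacle is the algebraic bookkeeping that converts the single-variable vanishing of $L_p-1$ into the explicit $O(\sum_{i<j}|y_iy_j|)$ bound; for $k=2$ this is a one-line calculation, and for general $k$ the $\max$ inclusion–exclusion identity above makes the manipulation manageable.
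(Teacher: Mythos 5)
Your proposal is correct, but it takes a genuinely different route from the paper: the paper's entire proof of Lemma \ref{Lemma_tau} is a citation of \cite[Prop.\ 2.3]{TotZha2018} (plus the remark that $\tau([n_1,\ldots,n_k])$ is multiplicative, so the series has an Euler product), whereas you supply a self-contained argument. Your key device, the min--max inclusion--exclusion $\max(\nu_1,\ldots,\nu_k)+1=\sum_{\emptyset\ne I}(-1)^{|I|+1}(\min_{i\in I}\nu_i+1)$, does yield the closed form $F_p=\prod_j(1-y_j)^{-1}\sum_{\emptyset\ne I}(-1)^{|I|+1}(1-\prod_{i\in I}y_i)^{-1}$ (check: $\sum_{\vec\nu}(\min_I\nu_i+1)\prod y_j^{\nu_j}=\sum_{m\ge0}\prod_{i\in I}\frac{y_i^m}{1-y_i}\prod_{j\notin I}\frac1{1-y_j}$), and carrying out the expansion one finds $L_p-1=\sum_{|J|\ge2}(-1)^{|J|}(1-|J|)\prod_{j\in J}y_j+\sum_{|I|\ge2}(-1)^{|I|+1}\frac{\prod_j(1-y_j)\prod_{i\in I}y_i}{1-\prod_{i\in I}y_i}$, since the $J=\emptyset$ terms give $1$ and the $|J|=1$ coefficients $1-|J|$ vanish; this is exactly the claimed $O_k\bigl(\sum_{i<\ell}p^{-\Re(s_i+s_\ell)}\bigr)$ bound, and it is consistent with the paper's constants (e.g.\ $G_2(1,1)=\prod_p\frac{(p-1)(p+2)}{p(p+1)}=\frac1{\zeta(2)}\prod_p\bigl(1-\frac1{(p+1)^2}\bigr)$, matching Theorem 2.11 of \cite{HeyTot2021} as quoted in the introduction). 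Your final step, reducing absolute convergence of $G_k$ to $\sum_p\sum_{\vec\nu\ne0}|g_k(p^{\vec\nu})|p^{-\sum\nu_j\sigma_j}<\infty$ and using $|g_k(p^{\vec\nu})|\ll_k\max\vec\nu+1$ together with the vanishing of $g_k$ on prime powers supported on a single coordinate, is also sound. What the paper's citation buys is brevity and generality (the Tóth--Zhai proposition covers a whole family of divisor-type functions of $[n_1,\ldots,n_k]$); what your argument buys is an explicit local factor and a transparent, quantitative reason for the precise shape of the convergence region $\Re s_j>0$, $\Re(s_j+s_\ell)>1$.
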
 

\begin{proof}[Proof of Lemma {\rm \ref{Lemma_tau}}] 
This is a special case of \cite[Prop.\ 2.3]{TotZha2018}. Note that the function $\tau([n_1,\ldots,n_k])$
is multiplicative and its multiple Dirichlet series can be expanded into an Euler product.
\end{proof}

\begin{proof}[Proof of Theorem {\rm \ref{Th_tau}}]
Similar to the proof of Theorem \ref{Th_class_r}. By Lemma \ref{Lemma_tau} we deduce that
\begin{equation*}
\tau([n_1,\ldots,n_k]) =\sum_{j_1 d_1=n_1,\ldots,j_k d_k=n_k}
\tau(j_1)\cdots \tau(j_k) g_k(d_1,\ldots,d_k),
\end{equation*}
and 
\begin{equation*}
\sum_{n_1\cdots n_k=n} \tau([n_1,\ldots,n_k]) =  \sum_{j_1d_1 \cdots j_kd_k=n} 
\tau(j_1)\cdots \tau(j_k) g_k(d_1,\ldots,d_k)
\end{equation*}
\begin{equation*}
= \sum_{jd_1 \cdots d_k=n} g_k(d_1,\ldots,d_k) \sum_{j_1\cdots j_k=j} \tau(j_1)\cdots \tau(j_k)=
\sum_{jd_1\cdots d_k =n} g_k(d_1,\ldots,d_k) \tau_{2k}(j).
\end{equation*}

Hence
\begin{equation*}
T:= \sum_{n_1\cdots n_k \le x} \tau([n_1,\ldots,n_k]) =  \sum_{jd_1 \cdots d_k \le x}  g_k(d_1,\ldots,d_k)\tau_{2k}(j)
\end{equation*}
\begin{equation*}
= \sum_{d_1,\ldots,d_k \le x} g_k(d_1,\ldots,d_k)  \sum_{j \le x/(d_1\cdots d_k)} \tau_{2k}(j).
\end{equation*}

By applying \eqref{Piltz_form}, we deduce that
\begin{equation} \label{est_T}
T=  \sum_{d_1,\ldots,d_k \le x} g_k(d_1,\ldots,d_k) \Big(\frac{x}{d_1\cdots d_k} P_{2k-1}\Big(\log \frac{x}{d_1\cdots d_k}\Big) + 
O\Big(\Big(\frac{x}{d_1\cdots d_k}\Big)^{\theta_{2k} + \varepsilon}\Big),
\end{equation}
and the error from \eqref{est_T} is 
\begin{equation*}
\ll x^{\theta_{2k}+\varepsilon}  \sum_{d_1,\ldots,d_k \le x} \frac{|g_k(d_1,\ldots,d_k)|}{(d_1\cdots d_k)^{\theta_{2k}+\varepsilon}}
\ll x^{\theta_{2k}+\varepsilon},
\end{equation*}
assuming that $\theta_{2k}\ge 1/2$ and by using Lemma \ref{Lemma_tau}.

Let $P_{2k-1}(t)= \sum_{j=0}^{2k-1} d_j t^j$. Then the main term in \eqref{est_T} is
\begin{equation*}
x \sum_{j=0}^{2k-1} d_j \sum_{d_1,\ldots,d_k \le x} \frac{g_k(d_1,\ldots,d_k)}{d_1\cdots d_k} \Big(\log \frac{x}{d_1\cdots d_k}\Big)^j
\end{equation*}
\begin{equation*}
= x \sum_{j=0}^{2k-1} d_j \sum_{d_1,\ldots,d_k \le x} \frac{g_k(d_1,\ldots,d_k)}{d_1\cdots d_k} \sum_{t=0}^j 
(-1)^t \binom{j}{t} (\log x)^{j-t} (\log (d_1\cdots d_k))^t
\end{equation*}
\begin{equation*}
= x \sum_{j=0}^{2k-1} d_j \sum_{t=0}^j (-1)^t \binom{j}{t} (\log x)^{j-t}  
\sum_{d_1,\ldots,d_k \le x} \frac{g_k(d_1,\ldots,d_k)(\log (d_1\cdots d_k))^t}{d_1\cdots d_k}.
\end{equation*}

Write (for a fixed $t$),
\begin{equation*}
\sum_{d_1,\ldots,d_k \le x} \frac{g_k(d_1,\ldots,d_k)(\log (d_1\cdots d_k))^t}{d_1\cdots d_k} =
\left(\sum_{d_1,\ldots,d_k=1}^{\infty} - \sideset{}{'} 
\sum_{d_1,\ldots,d_k}\right)  \frac{g_k(d_1,\ldots,d_k)(\log (d_1\cdots d_k))^t}{d_1\cdots d_k} 
\end{equation*}
where $\sum^{'}$ means that $d_1,\ldots,d_k\le x$ does not hold, that is, there exists at least one 
$m$ ($1\le m\le k$) such that $d_m> x$.

Here the multiple series over $d_1,\ldots,d_k$ is convergent by Lemma \ref{Lemma_tau} and by $\log (d_1\cdots d_k)\ll (d_1\cdots d_k)^{\delta}$ for every $\delta>0$. Now, to estimate the sum $\sum^{'}$ we can assume, without loss of generality, that $m=1$. 
We obtain that for every $0<\varepsilon <1$,
\begin{equation*}
\sideset{}{'}  \sum_{\substack{d_1,\ldots,d_k\\ d_1>x}}  
\frac{|g_k(d_1,\ldots,d_k)|(\log (d_1\cdots d_k))^t}{d_1\cdots d_k} 
\le \sideset{}{'} \sum_{\substack{d_1,\ldots,d_k\\ d_1>x}}  
\frac{|g_k(d_1,\ldots,d_k)|((d_1\cdots d_k)^{\delta t}}{d_1\cdots d_k}\left(\frac{d_1}{x}\right)^{1-\varepsilon} 
\end{equation*}
\begin{equation*}
\le x^{\varepsilon -1}  \sum_{d_1,\ldots,d_k=1}^{\infty}  
\frac{|g_k(d_1,\ldots,d_k)|}{d_1^{\varepsilon-\delta t}d_2^{1-\delta t} \cdots d_k^{1-\delta t}}  \ll 
x^{\varepsilon -1} 
\end{equation*}
if we choose $\delta$ such that $0<\delta < \frac{\varepsilon}{2t}$, where $t\ge 1$ (for $t=0$ one can choose any $\delta>0$), 
since the last series converges by Lemma \ref{Lemma_tau}. 
We obtain the final error $x\cdot x^{\varepsilon-1}(\log x)^{2k-1}$, which is of order $x^{\varepsilon}$. 
\end{proof}

\end{document}